\algrenewcommand\algorithmicrequire{\textbf{Input:}}
\begin{document}
\title*{On Bounding and Approximating Functions of Multiple Expectations using Quasi-Monte Carlo}
\titlerunning{On Bounding and Approximating Functions of Multiple Expectations using QMC}
\authorrunning{Sorokin \& Rathinavel}
\author{Aleksei G. Sorokin \and Jagadeeswaran Rathinavel}
\institute{
    Aleksei G. Sorokin \at Department of Applied Mathematics, Illinois Institute of Technology,\\ RE 220, 10 W.\ 32$^{\text{nd}}$ St., Chicago, IL 60616 \email{asorokin@hawk.iit.edu}
\and
    R. Jagadeeswaran \at Department of Applied Mathematics, Illinois Institute of Technology,\\ RE 220, 10 W.\ 32$^{\text{nd}}$ St., Chicago, IL 60616 \email{jrathin1@iit.edu}; and \\Wi-Tronix LLC, 631 E Boughton Rd, Suite 240, Bolingbrook, IL 60440}
\maketitle

\abstract{
Monte Carlo and Quasi-Monte Carlo methods present a convenient approach for approximating the expected value of a random variable. Algorithms exist to adaptively sample the random variable until a user defined absolute error tolerance is satisfied with high probability. This work describes an extension of such methods which supports adaptive sampling to satisfy general error criteria for functions of a common array of expectations. Although several functions involving multiple expectations are being evaluated, only one random sequence is required, albeit sometimes of larger dimension than the underlying randomness. These enhanced Monte Carlo and Quasi-Monte Carlo algorithms are implemented in the QMCPy Python package with support for economic and parallel function evaluation. We exemplify these capabilities on problems from machine learning and global sensitivity analysis.
}

\newpage

\section{Introduction}

Theoretical developments, stopping criteria, and implementations of both Monte Carlo (MC) and Quasi-Monte Carlo (QMC) methods often focus on approximating a \emph{QOI} (quantity of interest) which is a scalar mean  $\mu = \mathbb{E}[f(\boldsymbol{X})]$ with $\boldsymbol{X} \sim \mathcal{U}[0,1]^d$ and integrand $f: [0,1]^d \to \mathbb{R}$. However, in many cases the QOI $\boldsymbol{s} \in \mathbb{R}^{\boldsymbol{d}_{\boldsymbol{s}}}$ is formulated as a more complicated function of an array \emph{mean} $\boldsymbol{\mu} = \mathbb{E}[\boldsymbol{f}(\boldsymbol{X})] \in \mathbb{R}^{\boldsymbol{d}_{\boldsymbol{\mu}}}$. Here $\boldsymbol{\mu}$ is a multi-dimensional array, which we simply call an array, with shape vector $\boldsymbol{d}_{\boldsymbol{\mu}}$ e.g. $\boldsymbol{\mu} \in \mathbb{R}^{(2,3)}$ indicates $\boldsymbol{\mu}$ is a $2 \times 3$ matrix. Similarly, we allow $\boldsymbol{s}$ to be an array with shape $\boldsymbol{d}_{\boldsymbol{s}}$. The integrand is now $\boldsymbol{f}: [0,1]^{d} \to \mathbb{R}^{\boldsymbol{d}_{\boldsymbol{\mu}}}$ with $\boldsymbol{X} \sim \mathcal{U}[0,1]^d$ as before. 

The QOI array is formulated from the mean array via a function $\boldsymbol{C}: \mathbb{R}^{\boldsymbol{d}_{\boldsymbol{\mu}}} \to \mathbb{R}^{\boldsymbol{d}_{\boldsymbol{s}}}$ so that $\boldsymbol{s} = \boldsymbol{C}(\boldsymbol{\mu})$.
Example QOI arrays include
\begin{itemize}
    \item an $(a \times b)$ mean matrix where $\boldsymbol{C}$ is the identity and $\boldsymbol{d}_{\boldsymbol{s}} = \boldsymbol{d}_{\boldsymbol{\mu}} = (a,b)$,
    \item a Bayesian posterior mean where $s = C(\mu_1,\mu_2) = \mu_1/\mu_2$, $d_s = 1$, and $d_{\boldsymbol{\mu}} = 2$, 
    \item $c$ closed and total sensitivity indices requiring $\boldsymbol{d}_{\boldsymbol{s}} = (2,c)$ and $\boldsymbol{d}_{\boldsymbol{\mu}} = (2,3,c)$ to formulate $s_{ij} = C_{ij}(\boldsymbol{\mu}) =  \mu_{i3j}/(\mu_{i2j}-\mu_{i1j}^2)$ for $i \in \{1,2\}$, $j \in \{1,\dots,c\}$.
\end{itemize}
These examples are further detailed in Section \ref{SoRa_sec:examples}.

This article generalizes \cite{adaptive_qmc} to develop Algorithm \ref{SoRa_algo:MCStoppingCriterion} which 
\begin{enumerate}
    \item produces bounds $[\boldsymbol{s}^-,\boldsymbol{s}^+]$ on QOI $\boldsymbol{s}$ which hold with elementwise uncertainty below a user specified threshold array $\boldsymbol{\alpha}^{(\boldsymbol{s})} \in (0,1)^{\boldsymbol{d}_{\boldsymbol{s}}}$, 
    \item computes an optimal QOI approximation $\hat{\boldsymbol{s}}$ based on bounds $[\boldsymbol{s}^-,\boldsymbol{s}^+]$, a user specified error metric, and user specified error tolerance,
    \item repeats with increasing sample sizes until the stopping criterion is satisfied. 
\end{enumerate}
The algorithm utilizes existing (Q)MC methods that, given an appropriate set of sampling nodes and their corresponding function evaluations, produce bounds $[\boldsymbol{\mu}^-,\boldsymbol{\mu}^+]$ on the mean $\boldsymbol{\mu}$ that hold with elementwise uncertainty below a derived threshold array $\boldsymbol{\alpha}^{(\boldsymbol{\mu})} \in (0,1)^{\boldsymbol{d}_{\boldsymbol{\mu}}}$. A dependency mapping from $\boldsymbol{s}$ to $\boldsymbol{\mu}$ is used to derive $\boldsymbol{\alpha}^{(\boldsymbol{\mu})}$ from $\boldsymbol{\alpha}^{(\boldsymbol{s})}$. Interval arithmetic functions are used to propagate mean bounds $[\boldsymbol{\mu}^-,\boldsymbol{\mu}^+]$ to QOI bounds $[\boldsymbol{s}^-,\boldsymbol{s}^+]$. These interval arithmetic functions are derived from $\boldsymbol{C}$ and problem specific QOI restrictions. When exiting approximations to QOI are sufficiently accurate, the dependency function may tell the algorithm that certain outputs of $\boldsymbol{f}$ are not necessary to evaluate, a principal we call \emph{economic evaluation}. 

These enhancements are adapted to a number of QMC algorithms in the open source QMCPy Python package \cite{QMCPy} which is distributed on both GitHub and PyPI. The implementations incorporate shared samples, parallel computation, and economic evaluation to efficiently find bounds and approximations satisfying flexible user specifications.

The remainder of the article is organized as follows. Section \ref{SoRa_sec:MCM} differentiates how MC and QMC approximate a scalar mean $\mu$. A more detailed account of this mature field is available in \cite{niederreiter1992random}. Section \ref{SoRa_sec:Existing_QMC_Methods} outlines some (Q)MC algorithms to infer bounds $[\mu^-,\mu^+]$ on a scalar mean $\mu$ with uncertainty below a specified threshold $\alpha^{(\mu)}$. In Section \ref{SoRa_sec:comb_sol_approx} we consider the case where $\boldsymbol{\mu}$ is an array and $s$ is a scalar. Here we describe how to set array $\boldsymbol{\alpha}^{(\boldsymbol{\mu})}$ based on scalar $\alpha^{(s)}$ and how to propagate bounds $[\boldsymbol{\mu}^-,\boldsymbol{\mu}^+]$ on $\boldsymbol{\mu}$ to bounds $[s^-,s^+]$ on $s$ so both hold with uncertainty below $\alpha^{(s)}$. Section \ref{SoRa_sec:opt_comb_sol_sc} derives a stopping criterion for adaptive sampling and optimal approximation $\hat{s}$ of scalar QOI $s$. Both the approximation and stopping criterion are based on $[s^-,s^+]$, a user-specified error metric, and a user-specified error threshold. Considerations for extending to array QOI $\boldsymbol{s}$, including a generalized method for setting $\boldsymbol{\alpha}^{(\boldsymbol{\mu})}$ and a strategy for economic evaluation, are discussed in Section \ref{SoRa_sec: Vectorized Implementation} before presenting the unifying Algorithm \ref{SoRa_algo:MCStoppingCriterion}. Section \ref{SoRa_sec:examples} gives examples from machine learning and sensitivity analysis before Section \ref{SoRa_sec:conclusions} discusses conclusions and future work.   

\section{Monte Carlo and Quasi-Monte Carlo Methods} \label{SoRa_sec:MCM}

(Q)MC methods are well-suited to approximate a scalar mean $\mu = \mathbb{E}[f(\boldsymbol{X})]$ with $\boldsymbol{X} \sim \mathcal{U}[0,1]^d$ and integrand $f: [0,1]^{d} \to \mathbb{R}$. A change of variables may be necessary to ensure $\boldsymbol{X}$ is standard uniform, see \cite{QMCSoftware} for details and default transforms implemented in QMCPy. 
(Q)MC methods often approximate $\mu$ by the sample average of $f$ evaluated at nodes $\boldsymbol{x}_1,\dots,\boldsymbol{x}_n \in [0,1]^d$. We denote this approximation by  
\begin{equation}
    \label{SoRa_eq:mcapprox}
    \hat{\mu} = \frac{1}{n}\sum_{i=1}^n f(\boldsymbol{x}_i) \approx \mathbb{E}[f(\boldsymbol{X})] = \mu. 
\end{equation}
MC methods choose the sampling nodes to be independent and identically distributed (IID), that is $\boldsymbol{x}_1,\dots,\boldsymbol{x}_n \overset{\text{\tiny IID}}{\sim} \mathcal{U}[0,1]^{d}$. For MC, the absolute approximation error $\lvert \mu - \hat{\mu} \rvert$ is $\mathcal{O}(n^{-1/2})$. 

QMC methods choose the sampling nodes in a dependent manner to improve uniformity. Discrepancy measures quantify how close the empirical distribution of $\{\boldsymbol{x}_i\}_{i=1}^n$ is to the standard uniform distribution. The Koksma-Hlawka inequality bounds the absolute approximation error by the star discrepancy of $\{\boldsymbol{x}_i\}_{i=1}^n$ times the variation of $f$ in the sense of Hardy and Krause \cite{dick2013high}. Other discrepancy-variation pairings are also available, see \cite{hickernell1998generalized} for an overview. While it is often impractical to determine if $f$ has bounded variation, such inequalities indicate that using low discrepancy (LD) sequences in place of IID sequences can improve performance for nicely behaved $f$. A number of LD sequences exist that achieve a discrepancy of $\mathcal{O}(n^{-1+\delta})$ for any $\delta > 0$. Such LD sequences are the hallmark of QMC methods. When $f$ has bounded variation this rate upper bounds the absolute error of QMC methods, a significant improvement over the rate for MC methods. 

The QMC methods in this article utilize randomized extensible LD sequences. Randomization ensures, with probability $1$, that $\boldsymbol{x}_1,\dots,\boldsymbol{x}_n \in (0,1)^d$ and that the LD sequence does not badly match the integrand. Extensibility enables algorithms to adaptively increase the number of samples required to meet the stopping criterion without needing to discard previous function evaluations. Digital sequences and integration lattices are two popular choices for LD sequences. Constructions exist for both that support randomization and extensible designs. Figure \ref{SoRa_fig:ld_seqs} contrasts IID points with LD sequences in base $2$.

\begin{figure}[t]
    \centering
    \includegraphics[width=\textwidth]{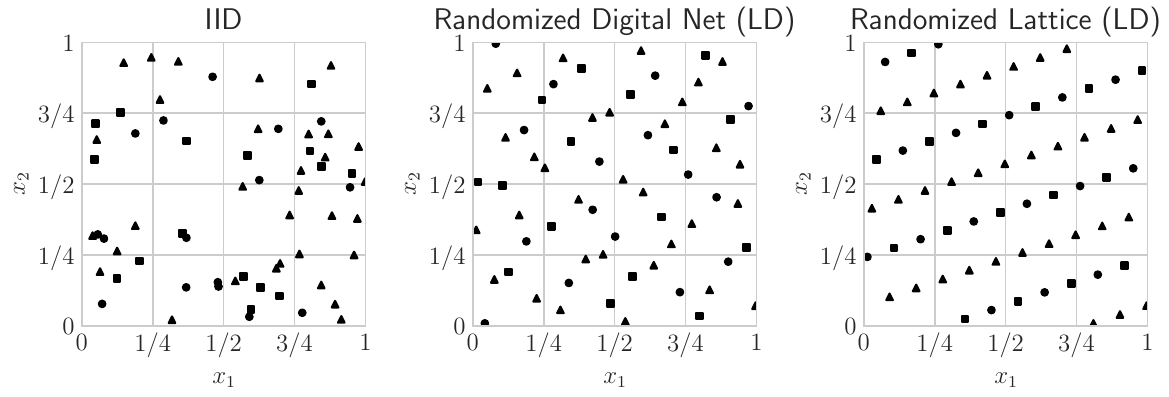}
    \caption{Contrast of IID points with randomized, extensible LD sequences. The first $2^4$ points of each sequence are squares, doubling to the first $2^5$ points adds circles, and doubling again to the first $2^6$ points adds the triangles. Note the gaps and clusters in the IID sequence contrasted with the more even coverage of LD sequences. Also, notice that as the sample size is doubled the extensible LD sequences do not discard previous points but instead fill in the gaps left by previous nodes.}
    \label{SoRa_fig:ld_seqs}
\end{figure}

\section{Bounding a Scalar Mean with MC and QMC}\label{SoRa_sec:Existing_QMC_Methods}

This section discusses some existing (Q)MC methods for inferring bounds $[\mu^-,\mu^+]$ on a scalar mean $\mu$ which hold with uncertainty less than some threshold $\alpha^{(\mu)} \in (0,1)$. Specifically, given nodes $\{\boldsymbol{x}_i\}_{i=1}^n$ and corresponding function evaluations $\{f(\boldsymbol{x}_i)\}_{i=1}^n$, we discuss methods for determining bounds $-\infty \leq \mu^- \leq \mu^+ \leq \infty$ so that $\mu \in [\mu^-,\mu^+]$ with probability greater than or equal to $1-\alpha^{(\mu)}$. Table \ref{SoRa_table:qmcpy_sc} compares the bounding methods discussed in the remainder of this section.

\begin{table}[t]
\centering
\begin{tabular}{r c c c c c c}
    QMCPy Class Name & MC Type & Point Sets & Bounds \\
    \hline
    \texttt{CubMCCLT} \cite{cubmcg} & MC & \texttt{IID} & Probabilistic \\
    \texttt{CubQMCRep} \cite{mcbook} & QMC & \texttt{LD} & Probabilistic \\
    \texttt{CubQMCNetG} \cite{cubqmcsobol} & QMC & \texttt{DigitalNetB2} & Deterministic \\
    \texttt{CubQMCLatticeG} \cite{cubqmclattice} & QMC & \texttt{Lattice} & Deterministic \\
    \texttt{CubQMCBayesNetG} \cite{cubqmcbayessobol} & QMC &  \texttt{DigitalNetB2} & Bayesian \\
    \texttt{CubQMCBayesLatticeG} \cite{cubqmcbayeslattice} & QMC & \texttt{Lattice} & Bayesian \\
    \hline
\end{tabular}
\caption{A comparison of algorithms in the QMCPy library capable of adaptively finding bounds on a scalar mean that hold with high probability. \emph{Type} indicates whether an algorithm is Monte Carlo (MC) or Quasi-Monte Carlo (QMC). \emph{Point Sets} indicate classes of compatible sequences in QMCPy. For example, \texttt{CubQMCRep} is compatible with any low discrepancy (\texttt{LD}) sequence including base 2 digital nets (\texttt{DigitalNetB2}) and integration lattices (\texttt{Lattice}). However, \texttt{CubQMCNetG} is only compatible with \texttt{DigitalNetB2} sequences and will not work with \texttt{Lattice} or other LD sequences. \emph{Bounds} specify the method of error estimation as discussed throughout Section \ref{SoRa_sec:Existing_QMC_Methods}. Deterministic bounds hold with probability $1$ i.e. for any $\alpha^{(\mu)} \in (0,1)$. Probabilistic and Bayesian bounds are tailored to the choice of $\alpha^{(\mu)}$. The GAIL MATLAB library \cite{ChoEtal21a} also implements these algorithms for a scalar mean. }
\label{SoRa_table:qmcpy_sc}
\end{table}

\begin{description}
    \item[\texttt{CubMCCLT}] When $\{\boldsymbol{x}_i\}_{i=1}^n$ are IID and $f$ has a finite variance, the Central Limit Theorem may provide a heuristic $1-\alpha^{(\mu)}$ confidence interval for $\mu$ by setting $\mu^\pm = \hat{\mu} \pm Z_{\alpha^{(\mu)}/2}\sigma/\sqrt{n}$. Here $Z_{\alpha^{(\mu)}/2}$ is the inverse CDF of a standard normal distribution at $1-\alpha^{(\mu)}/2$, and $\hat{\mu}$ is the sample average of function evaluations as in \eqref{SoRa_eq:mcapprox}. The variance of $f(\boldsymbol{X})$ is the generally unknown quantity $\sigma^2$ which may be approximated by the unbiased estimator $\hat{\sigma}^2 = 1/(n-1)\sum_{i=1}^n (f(\boldsymbol{x}_i)-\hat{\mu})^2$, perhaps multiplied by an inflation factor $C^2>1$ for a more conservative estimate. The resulting  heuristic bounds on $\mu$ are $\mu^\pm = \hat{\mu} \pm CZ_{\alpha^{(\mu)}/2} \hat{\sigma} / \sqrt{n}$.
    \item[\texttt{CubMCG}] In \cite{cubmcg}, Hickernell and collaborators extend \texttt{CubMCCLT} to accommodate  finite $n$ and provide bounds that are guaranteed to satisfy the uncertainty threshold. Their two-step method relies on the Berry-Esseen inequality and the assumption that $f$ lies in a cone of functions with known and bounded kurtosis. This method is not readily compatible with the adaptive sampling scheme in Algorithm \ref{SoRa_algo:MCStoppingCriterion}.
    \item[\texttt{CubQMCRep}] This method utilizes IID randomizations of a LD sequence and then derives bounds based on the IID sample averages. Specifically, suppose $\{\boldsymbol{x}_i^{(1)}\}_{i=1}^n,\dots,\{\boldsymbol{x}_i^{(R)}\}_{i=1}^n$ are $R$ IID randomizations of an LD point set. Then one may compute the $R$ IID sample averages $\hat{\mu}_r = \frac{1}{n} \sum_{i=1}^n f(\boldsymbol{x}_i^{(r)})$ for $r = 1,\dots,R$. Similar to what was done for \texttt{CubMCCLT}, one may then compute $\hat{\mu} = 1/R \sum_{r=1}^R \hat{\mu}_r$ and $\hat{\sigma}_R = \sqrt{1/(R-1)\sum_{i=1}^R(\hat{\mu}_r - \hat{\mu})^2}$ to produce heuristic bounds $\mu^\pm = \hat{\mu} \pm C T_{\alpha^{(\mu)}/2,R-1} \hat{\sigma}_R / \sqrt{R}$. Here $C>1$ is still an inflation factor and we now use $T_{\alpha^{(\mu)}/2,R-1}$, the inverse CDF of Student's-$t$ distribution with $R-1$ degrees of freedom, instead of $Z_{\alpha^{(\mu)}/2}$ since $R$ may be small.
    A more careful treatment of this heuristic method is available in \cite[Chapter 17]{mcbook}. 
    \item[\texttt{CubQMC\{Net,Lattice\}G}] Hickernell and Rugama developed algorithms in \cite{adaptive_qmc} that track the decay of Fourier coefficients based on a single randomized LD sequence i.e. $R=1$. These algorithms provide deterministic bounds on $\mu$ for functions in a cone parameterized by the decay rate of the Walsh coefficients for digital sequences \cite{cubqmcsobol} or the complex exponential Fourier coefficients for integration lattices \cite{cubqmclattice}. 
    \item[\texttt{CubQMCBayes\{Net,Lattice\}G}] Another pair of QMC algorithms take a Bayesian approach to error estimation, again using only a single randomized LD sequence. These algorithms assume the integrand is a realization of a Gaussian process. Utilizing special kernels matched to LD sequences enables the Gaussian process to be fit at $\mathcal{O}(n \log n)$ cost. Similar to \texttt{CubQMC\{Net,Lattice\}G}, Fourier and Walsh coefficients are used respectively with lattice and digital nets, except here the coefficients are used to derive credible intervals. Thus it provides a much stronger theoretical background. These algorithms must estimate shape and scale parameters for the covariance kernels which leads to greater computational cost. These Bayesian QMC algorithms \cite{cubqmcbayes_thesis} are also available for both digital nets \cite{cubqmcbayessobol} and integration lattices  \cite{cubqmcbayeslattice}. 
\end{description}


\section{Bounds on a Scalar QOI} \label{SoRa_sec:comb_sol_approx}

This section discusses how to compute bounds $[s^-,s^+]$ on scalar QOI $s = C(\boldsymbol{\mu})$ so that 
\begin{equation}
    P(s \in [s^-,s^+]) \geq 1-\alpha^{(s)}
    \label{SoRa_eq:scalar_comb_desired_ineq}
\end{equation}
where $\alpha^{(s)} \in (0,1)$ is an uncertainty threshold on the QOI bounds. Here $C: \mathbb{R}^{\boldsymbol{d}_{\boldsymbol{\mu}}} \to \mathbb{R}$ combines the array mean $\boldsymbol{\mu}$ into a scalar QOI $s$. First, we discuss how to set the array of mean uncertainty thresholds $\boldsymbol{\alpha}^{(\boldsymbol{\mu})} \in (0,1)^{\boldsymbol{d}_{\boldsymbol{\mu}}}$ so the resulting mean bounds $[\boldsymbol{\mu}^-,\boldsymbol{\mu}^+]$ contain $\boldsymbol{\mu}$ with uncertainty below $\alpha^{(s)}$. Then we discuss how the user may utilize $C$ to define \emph{bound functions} $C^-,C^+: \mathbb{R}^{\boldsymbol{d}_{\boldsymbol{\mu}}} \times \mathbb{R}^{\boldsymbol{d}_{\boldsymbol{\mu}}} \to \mathbb{R}$ so that setting $s^- = C^-(\boldsymbol{\mu}^-,\boldsymbol{\mu}^+)$ and $s^+ = C^+(\boldsymbol{\mu}^-,\boldsymbol{\mu}^+)$ ensures \eqref{SoRa_eq:scalar_comb_desired_ineq} is satisfied.

Let $N = \lvert \boldsymbol{d}_{\boldsymbol{\mu}} \rvert$ be the number of elements in a $\mathbb{R}^{\boldsymbol{d}_{\boldsymbol{\mu}}}$ array and set each element of $\boldsymbol{\alpha}^{(\boldsymbol{\mu})}$ to the constant $\alpha^{(s)}/ N$. Then Boole's inequality \cite{boole1847mathematical} implies that if $[\boldsymbol{\mu}^-,\boldsymbol{\mu}^+]$ are chosen so that
\begin{equation}
    P(\mu_{\boldsymbol{k}} \in [\mu_{\boldsymbol{k}}^-,\mu_{\boldsymbol{k}}^+]) \geq 1-\alpha_{\boldsymbol{k}}^{(\boldsymbol{\mu})}  \qquad \text{for all }\boldsymbol{1} \leq \boldsymbol{k} \leq \boldsymbol{d}_{\boldsymbol{\mu}},
    \label{SoRa_eq:indv_prob_bounds}
\end{equation}
then  
\begin{equation}
    P(\boldsymbol{\mu} \in [\boldsymbol{\mu}^-,\boldsymbol{\mu}^+]) \geq 1-\alpha^{(s)}.
    \label{SoRa_eq:indv_prob_bounds_all}
\end{equation}
The bounds in \eqref{SoRa_eq:indv_prob_bounds} may be found using the methods in Section \ref{SoRa_sec:Existing_QMC_Methods}.

To propagate bounds $[\boldsymbol{\mu}^-,\boldsymbol{\mu}^+]$ on mean $\boldsymbol{\mu}$ to bounds $[s^-,s^+]$ on QOI $s$, the user must define functions $C^-$ and $C^+$ using interval arithmetic \cite{interval_analysis} and problem specific knowledge. These functions must ensure $s \in [C^-(\boldsymbol{\mu}^-,\boldsymbol{\mu}^+),C^+(\boldsymbol{\mu}^-,\boldsymbol{\mu}^+)]$ whenever $\boldsymbol{\mu} \in [\boldsymbol{\mu}^-,\boldsymbol{\mu}^+]$. Without problem specific knowledge, one may set 
\begin{equation}
    s^- = C^-(\boldsymbol{\mu}^-,\boldsymbol{\mu}^+) = \min_{\boldsymbol{\mu} \in [\boldsymbol{\mu}^-,\boldsymbol{\mu}^+]} C(\boldsymbol{\mu}), \quad 
    s^+= C^+(\boldsymbol{\mu}^-,\boldsymbol{\mu}^+) = \max_{\boldsymbol{\mu} \in [\boldsymbol{\mu}^-,\boldsymbol{\mu}^+]} C(\boldsymbol{\mu}).
    \label{SoRa_eq:C_minus_C_plus}
\end{equation}
Table \ref{SoRa_table:elementary_ops_Cpm} provides examples of such interval arithmetic functions for some basic operations. Problem specific knowledge may be used to further shrink the naive bounds in bounds \eqref{SoRa_eq:C_minus_C_plus}. For example, if $s$ is a probability then $0 \leq s^- \leq s^+ \leq 1$ may be encoded into $C^-$ and $C^+$. See Section \ref{SoRa_sec:sensitivity_indices} for a more nuanced example. 

In all, given $\alpha^{(s)}$, we may set $\boldsymbol{\alpha}^{(\boldsymbol{\mu})}=\alpha^{(s)}/N$ elementwise then use scalar (Q)MC  algorithms to find $[\boldsymbol{\mu}^-,\boldsymbol{\mu}^+]$ satisfying \eqref{SoRa_eq:indv_prob_bounds} so that \eqref{SoRa_eq:indv_prob_bounds_all} holds. Then setting $[s^-,s^+]$ via \eqref{SoRa_eq:C_minus_C_plus}, potentially combined with problem specific knowledge, guarantees \eqref{SoRa_eq:scalar_comb_desired_ineq} holds.

\begin{table}[t]
\begin{tabular}{r  c  c}
    $s=C(\boldsymbol{\mu})$ & $s^- = C^-(\boldsymbol{\mu}^-,\boldsymbol{\mu}^+)$ & $s^+ = C^+(\boldsymbol{\mu}^-,\boldsymbol{\mu}^+)$ \\
    \hline
    $\mu_1+\mu_2$ & $\mu_1^-+\mu_2^-$ & $\mu_1^++\mu_2^+$ \\
    $\mu_1-\mu_2$ & $\mu_1^--\mu_2^+$ & $\mu_1^+-\mu_2^-$ \\
    $\mu_1 \cdot \mu_2$ & $\min(\mu_1^-\mu_2^-,\mu_1^-\mu_2^+,\mu_1^+\mu_2^-,\mu_1^+\mu_2^+)$ & $\max(\mu_1^-\mu_2^-,\mu_1^-\mu_2^+,\mu_1^+\mu_2^-,\mu_1^+\mu_2^+)$ \\
    $\mu_1 / \mu_2$ & $\begin{cases} -\infty, & 0 \in [\mu_2^-,\mu_2^+] \\ \min\left(\frac{\mu_1^-}{\mu_2^-},\frac{\mu_1^+}{\mu_2^-},\frac{\mu_1^-}{\mu_2^+},\frac{\mu_1^+}{\mu_2^+}\right), & 0 \notin [\mu_2^-,\mu_2^+] \end{cases}$ & $\begin{cases} \infty, & 0 \in [\mu_2^-,\mu_2^+] \\ \max\left(\frac{\mu_1^-}{\mu_2^-},\frac{\mu_1^+}{\mu_2^-},\frac{\mu_1^-}{\mu_2^+},\frac{\mu_1^+}{\mu_2^+}\right), & 0 \notin [\mu_2^-,\mu_2^+] \end{cases}$ \\
    $\min(\mu_1,\mu_2)$ & $\min(\mu_1^-,\mu_2^-)$ & $\min(\mu_1^+,\mu_2^+)$ \\
    $\max(\mu_1,\mu_2)$ & $\max(\mu_1^-,\mu_2^-)$ & $\max(\mu_1^+,\mu_2^+)$ \\
    \hline
\end{tabular}
\caption{Interval arithmetic functions for elementary operations.}
\label{SoRa_table:elementary_ops_Cpm}
\end{table}

\section{Optimal Approximation of a Scalar QOI} \label{SoRa_sec:opt_comb_sol_sc}

This section derives a stopping criterion for adaptive sampling and optimal approximation $\hat{s}$ of scalar QOI $s$. Let $h^{(\varepsilon)}: \mathbb{R} \to \mathbb{R}^+$ be an error metric dependent on some error tolerance $\varepsilon$ so that the stopping criterion is met if and only if the QOI approximation $\hat{s}$ satisfies 
\begin{equation}
    \lvert s-\hat{s} \rvert \leq h^{(\varepsilon)}(s) \quad \forall s \in [s^-,s^+].
    \label{SoRa_eq:sc_raw}
\end{equation}
Theorem \ref{SoRa_thm:shat_opt} determines the optimal $\hat{s}$ and an equivalent condition to \eqref{SoRa_eq:sc_raw} when $h^{(\varepsilon)}$ is a metric map i.e. Lipschitz continuous with constant at most $1$. Some compatible error metric options are
\begin{subequations}
\begin{align}
    h^{(\varepsilon)}(s) &= \max\left(\varepsilon^\text{abs},\lvert s \rvert \varepsilon^\text{rel} \right) \quad &&\text{absolute or relative error satisfied,} \label{SoRa_eq:h_abs_or_rel} \\
    h^{(\varepsilon)}(s) &= \min\left(\varepsilon^\text{abs},\lvert s \rvert \varepsilon^\text{rel} \right) \quad &&\text{absolute and relative error satisfied.} \label{SoRa_eq:h_abs_and_rel}
\end{align}
\end{subequations}
\begin{theorem} \label{SoRa_thm:shat_opt}
    Suppose that  $h^{(\varepsilon)}$ satisfies the metric map condition
    \begin{equation}
        \lvert h^{(\varepsilon)}(s_1) - h^{(\varepsilon)}(s_2) \rvert \leq \lvert s_1 - s_2 \rvert \qquad \text{for all } s_1,s_2 \in \mathbb{R}.
        \label{SoRa_eq:metric_map_cond}
    \end{equation}
    Then error criterion  \eqref{SoRa_eq:sc_raw} holds if and only if 
    \begin{equation}
        s^+-s^- \leq h^{(\varepsilon)}(s^-)+h^{(\varepsilon)}(s^+).
        \label{SoRa_eq:sc}
    \end{equation}
    Furthermore, the choice of 
    \begin{equation}
        \hat{s} = \frac{1}{2}\left[s^-+s^++h^{(\varepsilon)}(s^-)-h^{(\varepsilon)}(s^+)\right]
        \label{SoRa_eq:shat_opt}
    \end{equation}
    minimizes $\sup_{s \in [s^-,s^+]} \lvert s - \hat{s} \rvert -h^{(\varepsilon)}(s)$ for any choice of $s^{\pm}$ with $s^- < s^+$.
\end{theorem}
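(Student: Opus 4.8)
The plan is to analyze the single quantity $g(\hat{s}) := \sup_{s \in [s^-,s^+]}\left[\lvert s-\hat{s}\rvert - h^{(\varepsilon)}(s)\right]$ directly, since criterion \eqref{SoRa_eq:sc_raw} holds for a given $\hat{s}$ precisely when $g(\hat{s}) \le 0$. The first step is to split the absolute value: writing $\lvert s-\hat{s}\rvert = \max(s-\hat{s},\,\hat{s}-s)$ and using that the supremum of a pointwise maximum equals the maximum of the suprema, I obtain
\[
  g(\hat{s}) = \max\left( \sup_{s\in[s^-,s^+]}\left[s - h^{(\varepsilon)}(s)\right] - \hat{s}, \;\; \hat{s} - \inf_{s\in[s^-,s^+]}\left[s + h^{(\varepsilon)}(s)\right] \right).
\]
Both inner extrema are attained because $h^{(\varepsilon)}$, being a metric map, is continuous and $[s^-,s^+]$ is compact.

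The second step extracts the monotonicity implied by \eqref{SoRa_eq:metric_map_cond}: for $s_1 < s_2$,
\[
  \left(s_2 - h^{(\varepsilon)}(s_2)\right) - \left(s_1 - h^{(\varepsilon)}(s_1)\right) \ge (s_2-s_1) - \lvert h^{(\varepsilon)}(s_2)-h^{(\varepsilon)}(s_1)\rvert \ge 0,
\]
so $s\mapsto s - h^{(\varepsilon)}(s)$ is nondecreasing, and symmetrically $s \mapsto s + h^{(\varepsilon)}(s)$ is nondecreasing. Hence the supremum above equals $s^+ - h^{(\varepsilon)}(s^+)$ and the infimum equals $s^- + h^{(\varepsilon)}(s^-)$, so that $g(\hat{s}) = \max\left( A(\hat{s}),\, B(\hat{s})\right)$ with $A(\hat{s}) = s^+ - h^{(\varepsilon)}(s^+) - \hat{s}$ and $B(\hat{s}) = \hat{s} - s^- - h^{(\varepsilon)}(s^-)$.

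The third step is an elementary minimization of $g$ over $\hat{s}\in\mathbb{R}$: $A$ is affine and strictly decreasing while $B$ is affine and strictly increasing, so $\max(A,B)$ attains its unique minimum where $A(\hat{s}) = B(\hat{s})$; solving this linear equation gives $\hat{s} = \frac{1}{2}\left[s^- + s^+ + h^{(\varepsilon)}(s^-) - h^{(\varepsilon)}(s^+)\right]$, which is \eqref{SoRa_eq:shat_opt}, and substituting back yields $\min_{\hat{s}} g(\hat{s}) = \frac{1}{2}\left[(s^+-s^-) - h^{(\varepsilon)}(s^-) - h^{(\varepsilon)}(s^+)\right]$. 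Consequently \eqref{SoRa_eq:sc_raw} is satisfiable in $\hat{s}$ — and in particular by the choice \eqref{SoRa_eq:shat_opt} — if and only if this minimum is nonpositive, which is exactly \eqref{SoRa_eq:sc}; this establishes the equivalence, while the minimization claim is the displayed value of $\min_{\hat{s}} g$ attained at the stated minimizer.

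I do not anticipate a serious obstacle. The only points needing care are justifying the interchange of $\sup$ and $\max$ together with the attainment of the extrema (both handled by continuity of $h^{(\varepsilon)}$ and compactness of $[s^-,s^+]$), and fixing the reading of the biconditional — namely that \eqref{SoRa_eq:sc_raw} should be understood as ``solvable in $\hat{s}$,'' which is what makes it equivalent to the $\hat{s}$-free condition \eqref{SoRa_eq:sc}. It is also worth remarking that the minimizer \eqref{SoRa_eq:shat_opt} automatically lies in $[s^-,s^+]$, since $\lvert h^{(\varepsilon)}(s^-) - h^{(\varepsilon)}(s^+)\rvert \le s^+ - s^-$ by \eqref{SoRa_eq:metric_map_cond}.
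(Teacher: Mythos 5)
Your proof is correct and follows essentially the same route as the paper's: both reduce the supremum over $s \in [s^-,s^+]$ to the two endpoints via the Lipschitz property of $h^{(\varepsilon)}$ (your monotonicity of $s \mapsto s \mp h^{(\varepsilon)}(s)$ is the same computation as the paper's endpoint comparison) and then minimize a maximum of two monotone affine functions of $\hat{s}$ by equalizing them at the value \eqref{SoRa_eq:shat_opt}. Your packaging is marginally cleaner in that it treats all $\hat{s} \in \mathbb{R}$ at once and makes explicit the ``solvable in $\hat{s}$'' reading of the biconditional, which the paper leaves implicit.
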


\begin{proof}
    Define $g(s,\hat{s})=\lvert s - \hat{s} \rvert -h^{(\varepsilon)}(s)$. From \eqref{SoRa_eq:metric_map_cond}, it follows that if  $s^- \leq s \leq \hat{s}$ then $g(s^-,\hat{s})-g(s,\hat{s}) \geq 0$, and if $\hat{s} \leq s \leq s^+$ then $g(s^+,\hat{s})-g(s,\hat{s})  \geq 0$. This means that $g(\cdot,\hat{s})$ attains its maximum at either $s^-$ or $s^+$ so that
    \begin{equation*}
        \max_{s \in [s^-,s^+]} g(s,\hat{s}) = \max_{s \in \{s^-,s^+\}} g(s,\hat{s}).
    \end{equation*}
    
    Next, we find the optimal choice of $\hat{s}$.  The function $g(s^-,\cdot)$ is monotonically decreasing to the left of  $s^-$ and monotonically increasing to the right of $s^-$. Similarly, $g(s^+,\cdot)$ is monotonically decreasing to the left of $s^+$ and monotonically increasing to the right of $s^+$. This means that the optimal choice of $\hat{s}$ to minimize $\max_{s \in \{s^-,s^+\}} g(s,\hat{s})$ lies in $[s^-,s^+]$ and satisfies $g(s^-,\hat{s}) = g(s^+,\hat{s})$, that is, 
    $$\hat{s} - s^- - h^{(\varepsilon)}(s^-) = s^+ - \hat{s} - h^{(\varepsilon)}(s^+).$$
    Solving for the optimal value of $\hat{s}$ leads to \eqref{SoRa_eq:shat_opt}.
    
    For this optimal $\hat{s}$, 
    $$2 \max_{s \in [s^-,s^+]} g(s,\hat{s}) =  s^+  -  s^-  - h^{(\varepsilon)}(s^-) - h^{(\varepsilon)}(s^+).$$
    The error criterion is equivalent to $\max_{s \in [s^-,s^+]} g(s,\hat{s}) \le 0 $.  This can only hold under condition  \eqref{SoRa_eq:sc}. 
\end{proof}

\section{Adaptive Algorithm with Extension to Array QOI} \label{SoRa_sec: Vectorized Implementation}

In the previous section, we assumed an array mean $\boldsymbol{\mu}$ was used to compute a scalar QOI $s$. We now relax these assumptions to enable approximation of array QOI $\boldsymbol{s}$. The optimal approximation $\hat{\boldsymbol{s}}$ and stopping criterion may still be computed by elementwise application of \eqref{SoRa_eq:shat_opt} and \eqref{SoRa_eq:sc}. 

For some integrands $\boldsymbol{f}$ it is possible to avoid evaluating particular integrand outputs when all affected QOI have already been sufficiently approximated. In such cases, the user may enable economic evaluation by defining a dependency function $\boldsymbol{D}: \{\text{True},\text{False}\}^{\boldsymbol{d}_{\boldsymbol{s}}} \to \{\text{True},\text{False}\}^{\boldsymbol{d}_{\boldsymbol{\mu}}}$ which maps stopping flags on QOI to stopping flags on means. The latter indicates which outputs the integrand is required to compute in the next iteration.  We say (QOI) index $\boldsymbol{1} \leq \boldsymbol{l} \leq \boldsymbol{d}_{\boldsymbol{s}}$ depends on (mean) index $\boldsymbol{1} \leq \boldsymbol{k} \leq \boldsymbol{d}_{\boldsymbol{\mu}}$ if the $\boldsymbol{k}^\text{th}$ entry is $\text{True}$ in the output of evaluating $\boldsymbol{D}$ at the multi-dimensional array with only the $\boldsymbol{l}^\text{th}$ entry set to $\text{True}$.

Moreover, $\,\boldsymbol{D}$ may be used to compute mean uncertainty levels $\boldsymbol{\alpha}^{(\boldsymbol{\mu})}$ in the spirit of Boole's inequality as done in Section \ref{SoRa_sec:comb_sol_approx}. The idea is to ensure that each element of $\boldsymbol{\alpha}^{(\boldsymbol{s})}$ is greater than the sum of elements in $\boldsymbol{\alpha}^{(\boldsymbol{\mu})}$ with dependent indices. Specifically, let $\boldsymbol{N} \in \mathbb{N}^{\boldsymbol{d}_{\boldsymbol{s}}}$ contain the number of dependent mean for each QOI. That is, for every $\boldsymbol{1} \leq \boldsymbol{l} \leq \boldsymbol{d}_{\boldsymbol{s}}$, $N_{\boldsymbol{l}}$ is the number of indices dependent on $\boldsymbol{l}$. For every $\boldsymbol{1} \leq \boldsymbol{k} \leq \boldsymbol{d}_{\boldsymbol{\mu}}$, if $\boldsymbol{l}$ is dependent on $\boldsymbol{k}$ then $\alpha_{\boldsymbol{l}}^{(\boldsymbol{s})}/N_{\boldsymbol{l}}$ is a candidate for $\alpha_{\boldsymbol{k}}^{(\boldsymbol{\mu})}$. We then set $\alpha_{\boldsymbol{k}}^{(\boldsymbol{\mu})}$ to the minimum amongst all candidates for $\alpha_{\boldsymbol{k}}^{(\boldsymbol{\mu})}$, assuming the candidate set is not empty.

While not theoretically required, our implementation practically requires that each index $\boldsymbol{1} \leq \boldsymbol{k} \leq \boldsymbol{d}_{\boldsymbol{\mu}}$ be a dependency of exactly one index $\boldsymbol{1} \leq \boldsymbol{l} \leq \boldsymbol{d}_{\boldsymbol{s}}$. To illustrate this requirement and the previously discussed dependency structure, let us consider the simple example with QOI $s_1 = \mu_1 + \mu_2$ and $s_2 = \mu_1 + \mu_3$. Suppose after some iteration that $s_1$ is sufficiently approximated and $s_2$ is not. Since $\mu_1$ is a dependency of $s_2$, we would like to continue sampling for $\mu_1$ to get a better approximation of $s_2$. However, changes in the bounds on $\mu_1$ will change the bounds on $s_1$ and may potentially make the approximation of $s_1$ become insufficient again. This out of sync nature of the sampling occurs because $\mu_1$ is a dependency of more than one QOI. To remedy this, let $\mu_4 = \mu_1$ and set $s_2 = \mu_4 + \mu_3$. Now each mean is a dependency of exactly one QOI as required. In practice this remedy amounts to copying integrand outputs at index $1$ into index $4$, thus increasing storage requirements in favor of potentially avoiding evaluating integrand outputs at index $2$ or $3$. This dependency structure is illustrated below with dependency function $D(b_1,b_2) = (b_1,b_1,b_2,b_2)$ where $b_1,b_2 \in \{\text{True},\text{False}\}$. 

\begin{figure}
    \centering
\begin{tikzpicture}[main/.style = {draw, circle}] 
    \node at (0,0)  [main,line width=3pt] (1) {$\mu_1$}; 
    \node at (2,0)  [main,line width=3pt] (2) {$\mu_2$};
    \node at (4,0)  [main,line width=3pt] (3) {$\mu_3$};
    \node at (6,0)  [main,line width=3pt] (4) {$\mu_4$};
    \node at (1,1) [main,line width=3pt] (5) {$s_1$};
    \node at (5,1) [main,line width=3pt] (6) {$s_2$};
    \draw[->,line width=3pt] (5) -- (1);
    \draw[->,line width=3pt] (5) -- (2);
    \draw[->,line width=3pt] (6) -- (3);
    \draw[->,line width=3pt] (6) -- (4);
\end{tikzpicture}
\end{figure}

Algorithm \ref{SoRa_algo:MCStoppingCriterion} details the adaptive procedure developed throughout this article. Notice that the implementation does not require specifying $\boldsymbol{C}$ despite its use in deriving the necessary $\boldsymbol{C}^-$ and $\boldsymbol{C}^+$ inputs. The cost of this algorithm is concentrated on evaluating the function at an IID or LD sequence. In practice, the run time may be reduced through parallel and/or economic evaluation.

\begin{algorithm}[t]
    \caption{Adaptive (Quasi-)Monte Carlo for Array QOI}
    \label{SoRa_algo:MCStoppingCriterion}
    \begin{algorithmic}
    \Require $\boldsymbol{f}: (0,1)^d \to \mathbb{R}^{\boldsymbol{d}_{\boldsymbol{\mu}}}$, the integrand where $\boldsymbol{\mu} = \mathbb{E}[\boldsymbol{f}(\boldsymbol{X})]$ for $\boldsymbol{X} \sim \mathcal{U}[0,1]^d$.
    \Require $\boldsymbol{C}^-,\boldsymbol{C}^+: \mathbb{R}^{\boldsymbol{d}_{\boldsymbol{\mu}}} \times \mathbb{R}^{\boldsymbol{d}_{\boldsymbol{\mu}}} \to \mathbb{R}^{\boldsymbol{d}_{\boldsymbol{s}}}$, generalization of \eqref{SoRa_eq:C_minus_C_plus} so $\boldsymbol{\mu} \in [\boldsymbol{\mu}^-,\boldsymbol{\mu}^+]$ implies $\boldsymbol{s} \in [\boldsymbol{C}^-(\boldsymbol{\mu}^-,\boldsymbol{\mu}^+),\boldsymbol{C}^+(\boldsymbol{\mu}^-,\boldsymbol{\mu}^+)]$.
    \Require $\boldsymbol{\alpha}^{(\boldsymbol{s})} \in (0,1)^{\boldsymbol{d}_{\boldsymbol{s}}}$, the desired uncertainty thresholds on QOI bounds so the returned $[\boldsymbol{s}^-,\boldsymbol{s}^+]$ will satisfy $P(s_{\boldsymbol{l}} \in [s_{\boldsymbol{l}}^-,s_{\boldsymbol{l}}^+]) \geq 1-\alpha^{(\boldsymbol{s})}_{\boldsymbol{l}}$ for any $\boldsymbol{1} \leq \boldsymbol{l} \leq \boldsymbol{d}_{\boldsymbol{s}}$.
    \Require $h^{(\varepsilon_{\boldsymbol{l}})}_{\boldsymbol{l}}: \mathbb{R} \to \mathbb{R}^+$ for $\boldsymbol{1} \leq \boldsymbol{l} \leq \boldsymbol{d}_{\boldsymbol{l}}$, see \eqref{SoRa_eq:h_abs_or_rel} or \eqref{SoRa_eq:h_abs_and_rel} for examples. Stopping flag at index $\boldsymbol{l}$ is set to $\text{True}$ when $\lvert s_{\boldsymbol{l}} - \hat{s}_{\boldsymbol{l}} \rvert \leq h^{(\varepsilon_{\boldsymbol{l}})}_{\boldsymbol{l}}(s_{\boldsymbol{l}})$ for all $s_{\boldsymbol{l}} \in [s_{\boldsymbol{l}}^-,s_{\boldsymbol{l}}^+]$.
    \Require $\boldsymbol{D}: \{\text{True},\text{False}\}^{\boldsymbol{d}_{\boldsymbol{s}}} \to \{\text{True},\text{False}\}^{\boldsymbol{d}_{\boldsymbol{\mu}}}$, maps stopping flags on QOI $\boldsymbol{s}$ to stopping flags on mean $\boldsymbol{\mu}$. 
    \Require A scalar (Q)MC algorithm capable of producing bounds on a mean which holds with uncertainty below a specified threshold. See the methods in Section \ref{SoRa_sec:Existing_QMC_Methods}.
    \Require A generator of IID or LD sequences compatible with the scalar MC or QMC algorithm. 
    \Require $m_1 \in \mathbb{N}$, where $2^{m_1}$ is the initial number of samples.
    \\ \hrulefill
    \State $n_\text{start} \gets 1$ \Comment{lower index in node sequence(s)}
    \State $n_\text{end} \gets 2^{m_1}$ \Comment{upper index in node sequence(s)}
    \State $\boldsymbol{b}^{(\boldsymbol{\mu})} \gets \text{False}^{\boldsymbol{d}_{\boldsymbol{\mu}}}$ \Comment{stopping flags on the mean}
    \State $\boldsymbol{b}^{(\boldsymbol{s})} \gets \text{False}^{\boldsymbol{d}_{\boldsymbol{s}}}$ \Comment{stopping flags on QOI}
    \State Set $\boldsymbol{\alpha}^{(\boldsymbol{\mu})}$ based on $\boldsymbol{\alpha}^{(\boldsymbol{s})}$ using $\boldsymbol{D}$ \Comment{See discussion in Section \ref{SoRa_sec: Vectorized Implementation}}
    \While{$b_{\boldsymbol{l}}^{(\boldsymbol{s})} = \text{False}$ for some $\boldsymbol{1} \leq \boldsymbol{l} \leq \boldsymbol{d}_{\boldsymbol{s}}$} \Comment{A QOI element is not sufficiently bounded}
        \State Generate nodes from the IID or LD sequence(s) from index $n_\text{start}$ to $n_\text{end}$
        \State Evaluate $\boldsymbol{f}$ at the new nodes where $\boldsymbol{b}^{(\boldsymbol{\mu})}=\textbf{\text{False}}$ \Comment{may be done in parallel}
        \State Update $\boldsymbol{\mu}^-$ and $\boldsymbol{\mu}^+$ using the scalar (Q)MC algorithm where $\boldsymbol{b}^{(\boldsymbol{\mu})}=\textbf{\text{False}}$
        \State $[\boldsymbol{s}^-,\boldsymbol{s}^+] \gets \left[\boldsymbol{C}^-(\boldsymbol{\mu}^-,\boldsymbol{\mu}^+),\boldsymbol{C}^+(\boldsymbol{\mu}^-,\boldsymbol{\mu}^+)\right]$ 
        \State $b^{(\boldsymbol{s})}_{\boldsymbol{l}} \gets \text{Boolean}\left(s_{\boldsymbol{l}}^+-s_{\boldsymbol{l}}^- < h^{(\varepsilon_{\boldsymbol{l}})}_{\boldsymbol{l}}(s_{\boldsymbol{l}}^-)+h^{(\varepsilon_{\boldsymbol{l}})}_{\boldsymbol{l}}(s_{\boldsymbol{l}}^+)\right),\quad \boldsymbol{1} \leq \boldsymbol{l} \leq \boldsymbol{d}_{\boldsymbol{s}}$ \Comment{\eqref{SoRa_eq:sc} elementwise}
        \State $\boldsymbol{b}^{(\boldsymbol{\mu})} \gets \boldsymbol{D}\left(\boldsymbol{b}^{(\boldsymbol{s})}\right)$
        \State $n_\text{start} \gets n_\text{end}+1$
        \State $n_\text{end} \gets 2n_\text{start}$
    \EndWhile
    \State $\hat{s}_{\boldsymbol{l}} \gets \frac{1}{2}[s_{\boldsymbol{l}}^-+s_{\boldsymbol{l}}^++h^{(\varepsilon_{\boldsymbol{l}})}_{\boldsymbol{l}}(s_{\boldsymbol{l}}^-)-h^{(\varepsilon_{\boldsymbol{l}})}_{\boldsymbol{l}}(s_{\boldsymbol{l}}^+)], \quad\boldsymbol{1} \leq \boldsymbol{l} \leq \boldsymbol{d}_{\boldsymbol{s}}$ \Comment{\eqref{SoRa_eq:shat_opt} elementwise}
    \State \Return $\hat{\boldsymbol{s}},[\boldsymbol{s}^-,\boldsymbol{s}^+]$
    \end{algorithmic}
\end{algorithm}

\section{Examples} \label{SoRa_sec:examples}

This section presents a number of examples spanning machine learning and sensitivity analysis. Code implementing these examples in the QMCPy framework and reproducing the figures in this article is available in \cite{vectorized_qmc_demo_notebook}. More details on the QMCPy framework are available in \cite{QMCSoftware}.

\subsection{Vectorized Acquisition Functions for Bayesian Optimization}

Bayesian optimization (BO) is a sequential optimization technique that attempts to find the global maximum of a black box function $\varphi: (0,1)^{\nu} \to \mathbb{R}$. It is assumed that $\varphi$ is expensive to evaluate, so we must strategically select sampling locations that maximize some utility or acquisition function. At a high level, BO 
\begin{enumerate}
    \item Iteratively samples $\varphi$ at locations maximizing the acquisition function,
    \item Updates a Gaussian process surrogate based on these new observations,
    \item Updates an acquisition function based on the updated surrogate,
    \item repeats until the budget for sampling $\varphi$ has expired.
\end{enumerate}
Bayesian optimization is detailed in \cite{snoek2012practical} while Gaussian process regression is given individual treatment in \cite{rasmussen2006gaussian}.

Concretely, suppose we have already sampled $\varphi$ at $\boldsymbol{z}_1,\dots,\boldsymbol{z}_{N} \in [0,1]^{\nu}$ to collect data $\mathcal{D}=\{(\boldsymbol{z}_i,y_i)\}_{i=1}^N$ where $y_i = \varphi(\boldsymbol{z}_i)$. BO may then fit a Gaussian process surrogate to data $\mathcal{D}$. The next $d$ sampling locations may then be chosen to maximize an acquisition function $\alpha: [0,1]^{(d,\nu)} \to \mathbb{R}$ which takes a matrix whose rows are the next sampling locations to a payoff value. Specifically, we set $\boldsymbol{z}_{N+1}, \dots, \boldsymbol{z}_{N+d}$ to be the rows of $\mathop{\text{argmax}}_{\boldsymbol{Z} \in [0,1]^{(d,\nu)}}\alpha(\boldsymbol{Z})$. Many acquisition functions may be expressed as an expectation of the form $\alpha(\boldsymbol{Z}) = \mathbb{E}\left[a(\boldsymbol{y}) \mid \boldsymbol{y} \sim \mathcal{N}\left(\boldsymbol{m},\boldsymbol{\Sigma}\right)\right]$ where $\boldsymbol{m} \in \mathbb{R}^{d}$ and  $\boldsymbol{\Sigma} \in \mathbb{R}^{(d,d)}$ are respectively the posterior mean and covariance of the Gaussian process at points $\boldsymbol{Z}$. Here we focus on the q-Expected Improvement (qEI) acquisition function which uses $a(\boldsymbol{y}) = \max_{1 \leq i \leq d} (y_i - y^*)_+$ where $y^*= \max\left(y_1,\dots,y_N\right)$ is the current maximum and $(\cdot)_+ = \max(\cdot,0)$. 

Suppose we choose the argument maximum  from among a finite set of $d$-sized batches $\boldsymbol{Z}_1,\dots,\boldsymbol{Z}_{d_{\boldsymbol{\mu}}} \in [0,1]^{(d,\nu)}$ so that $\boldsymbol{z}_{N+1}, \dots,\boldsymbol{z}_{N+d}$ are set to be the rows of $\mathop{\text{argmax}}_{\boldsymbol{Z} \in \{\boldsymbol{Z}_1,\dots,\boldsymbol{Z}_{d_{\boldsymbol{\mu}}}\}}\alpha(\boldsymbol{Z})$. We may vectorize the acquisition function computations to $s_i = \mu_i = \alpha(\boldsymbol{Z}_i) = \mathbb{E}\left[a\left(\boldsymbol{A}_i\boldsymbol{\Phi}^{-1}(\boldsymbol{X})+\boldsymbol{m}_i\right)\right]$ for $i=1,\dots,d_\mu$ where $\boldsymbol{X} \sim \mathcal{U}(0,1)^d$ and $\boldsymbol{\Phi}^{-1}$ is the inverse CDF of the standard Gaussian taken elementwise. Now $\boldsymbol{m}_i$ and $\boldsymbol{\Sigma}_i = \boldsymbol{A}_i\boldsymbol{A}_i^T$ are the posterior mean and covariance respectively of the Gaussian process at $\boldsymbol{Z}_i$ so that $\boldsymbol{A}_i\boldsymbol{\Phi}^{-1}(\boldsymbol{X})+\boldsymbol{m}_i \sim \mathcal{N}\left(\boldsymbol{m}_i,\boldsymbol{\Sigma}_i\right)$ for $i=1,\dots,d_{\boldsymbol{\mu}}$.

Since the quantity of interest is simply the vector of expectations, one may set $\boldsymbol{C}$, $\boldsymbol{C}^-$, $\boldsymbol{C}^+$, and $\boldsymbol{D}$ to appropriate identity functions. The process described above is visualized in Figure \ref{SoRa_fig:bo_qei} for $\nu=1$ and $d=2$. In this example, it may be more intuitive to make $\boldsymbol{d}_{\boldsymbol{\mu}}$ have length $2$ so the matrix of means reflects the grid of white dots in the right panel of Figure \ref{SoRa_fig:bo_qei}. 


\begin{figure}[t]
    \centering
    \includegraphics[width=.9\textwidth]{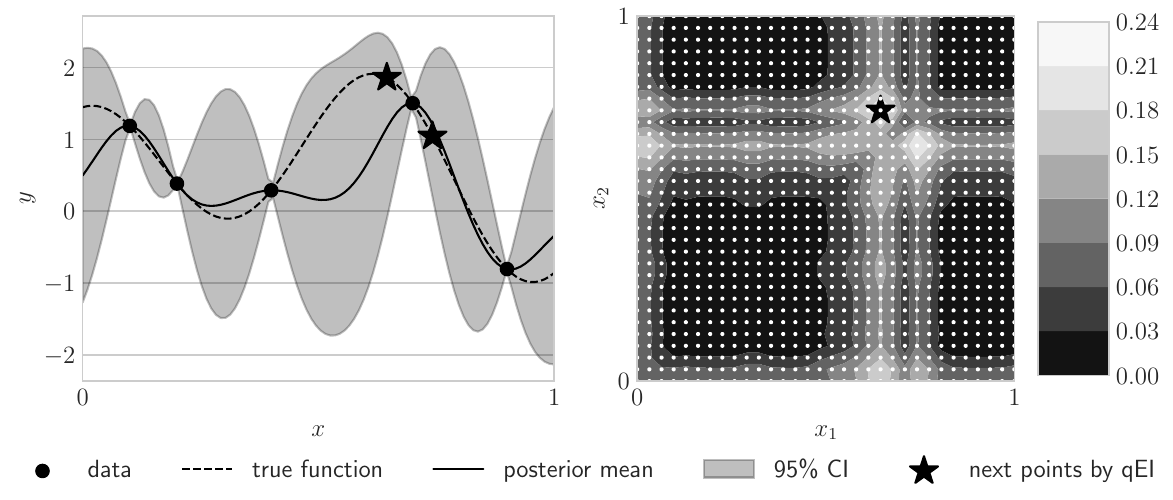}
    \caption{First, the true function has been sampled at the data points shown in the left panel. Next, a Gaussian Process is fit to the data points to approximate the true function. The posterior mean and $95\%$ confidence interval (CI) of the Gaussian Process are shown in the left panel. With $d=2$, a fine grid of candidates is chosen in $[0,1]^{2}$ and depicted in the right panel. The presented (Q)MC algorithm is then used to approximate the acquisition function value at each of the candidate grid points. These approximations are made into a contour plot in the right panel. The discrete argument maximum among these approximations on the fine grid is the next size $d$ batch of points by qEI. These next points for sequential optimization are visualized in both the right and left panels.}
    \label{SoRa_fig:bo_qei}
\end{figure}

\subsection{Bayesian Posterior Mean}

The Bayesian framework combines prior knowledge of random parameters $\boldsymbol{\Theta} \in \mathbb{R}^{d_{\boldsymbol{s}}}$ with observational data and a likelihood function $\rho$ to construct a model-aware posterior distribution on $\boldsymbol{\Theta}$. Suppose we have a dataset of observations $\boldsymbol{y} = (y_1,\dots,y_{N})$ taken at IID locations $\boldsymbol{z}_1,\dots,\boldsymbol{z}_{N}$ respectively. Then Bayes' rule may be used to write the posterior density of $\boldsymbol{\Theta}$ as 
$$P\left(\boldsymbol{\theta} \mid \boldsymbol{y} \right) = \frac{P(\boldsymbol{y} \mid \boldsymbol{\theta}) P(\boldsymbol{\theta})}{P\left(\boldsymbol{y}\right)} = \frac{\prod_{i=1}^{N} \rho(y_i \mid \boldsymbol{\theta}) P(\boldsymbol{\theta})}{\mathbb{E}\left[\prod_{i=1}^{N} \rho(y_i \mid \boldsymbol{\theta})\right]}.$$
Here the expectation is taken with respect to the prior distribution on $\boldsymbol{\Theta}$ with density $P(\boldsymbol{\theta})$, and $P\left(\boldsymbol{y} \mid \boldsymbol{\theta} \right)$ is the likelihood density which factors into the product of likelihoods $\rho(y_i \mid \boldsymbol{\theta})$ since the observations are IID. 

A useful quantity of interest is the posterior mean of $\boldsymbol{\Theta}$. In this example, the QOI is posterior mean $\boldsymbol{s}$ which may be written as the ratio of expectations via $\boldsymbol{s} = \mathbb{E}\left[\boldsymbol{\Theta} \mid \boldsymbol{y}\right] = \mathbb{E}\left[\boldsymbol{\Theta} \; \prod_{i=1}^{N} \rho(y_i \mid \boldsymbol{\Theta})\right]/\mathbb{E}\left[\prod_{i=1}^{N} \rho(y_i \mid \boldsymbol{\Theta})\right]$. As before, the expectations are taken with respect to the prior distribution on $\boldsymbol{\Theta}$. In the framework of this article $\boldsymbol{\mu} \in \mathbb{R}^{(2, d_{\boldsymbol{s}})}$ where for $k=1,\dots,d_{\boldsymbol{s}}$ we have 
$$\mu_{0k} = \mathbb{E}\left[\Theta_k \prod_{i=1}^{N} \rho(y_i \mid \boldsymbol{\Theta})\right], \quad \mu_{1k} = \mathbb{E}\left[\prod_{i=1}^{N} \rho(y_i \mid \boldsymbol{\Theta})\right], \quad \text{and} \quad s_k = \frac{\mu_{0k}}{\mu_{1k}}.$$
Defining $\boldsymbol{C}^-$ and $\boldsymbol{C}^+$ follow from vectorizing the quotient forms in Table \ref{SoRa_table:elementary_ops_Cpm} while the dependency function $\boldsymbol{D}: \{\text{True},\text{False}\}^{d_{\boldsymbol{s}}} \to \{\text{True},\text{False}\}^{(2, d_{\boldsymbol{s}})}$ is defined by stacking the row vectors of QOI flags on top of itself.

\subsection{Sensitivity Indices}\label{SoRa_sec:sensitivity_indices}

Sensitivity analysis quantifies how uncertainty in a function output may be attributed to subsets of function inputs. Functional ANOVA (analysis of variance) decomposes a function $\varphi \in L^2(0,1)^\nu$ into the sum of orthogonal functions $(\varphi_u)_{u \subseteq {1:\nu}}$. Here $1:\nu=\{1,\dots,\nu\}$ denotes the set of all dimensions and $\varphi_u \in L^2(0,1)^{\lvert u \rvert}$ denotes a sub-function dependent only on inputs $\boldsymbol{x}_u = (x_j)_{j \in u}$ where $\lvert u \rvert$ is the cardinality of $u$. By construction, these sub-functions sum to the objective function so that $\varphi(\boldsymbol{x}) = \sum_{u \subseteq 1:\nu} \varphi_u(\boldsymbol{x}_u)$ \cite[Appendix A]{mcbook}. 
The orthogonality of sub-functions enables the variance of $\varphi$ to be decomposed into the sum of variances of sub-functions. Specifically, denoting the variance of $\varphi$ by $\sigma^2$, we may write $\sigma^2 = \sum_{u \subseteq 1:\nu} \sigma^2_u$ where $\sigma^2_u$ is the variance of sub-function $\varphi_u$. The sub-variance $\sigma_u$ quantifies the variance of $\varphi$ attributable to inputs $u \subseteq 1:\nu$.  The \emph{closed and total Sobol' indices}
\begin{align*}
    \underline{\tau}_u^2 &= \sum_{v \subseteq u} \sigma^2_v = \int_{[0,1]^{2\nu}} f(\boldsymbol{x})[f(\boldsymbol{x}_{u_j},\boldsymbol{z}_{-{u_j}})-f(\boldsymbol{z})]\mathrm{d}\boldsymbol{x}\mathrm{d}\boldsymbol{z} \quad \text{and}  \\ 
    \overline{\tau}_u^2 &= \sum_{v \cap u \neq \emptyset} \sigma^2_v = \frac{1}{2}\int_{[0,1]^{2\nu}} [f(\boldsymbol{z})-f(\boldsymbol{x}_u,\boldsymbol{z}_{-{u_j}})]^2\mathrm{d}\boldsymbol{x}\mathrm{d}\boldsymbol{z}
    \label{SoRa_eq:sobol_indices}
\end{align*}
quantify the variance attributable to subsets of $u$ and subsets containing $u$ respectively. Here the notation $(\boldsymbol{x}_{u},\boldsymbol{z}_{-u})$ denotes a point where the value at index $1 \leq j \leq \nu$ is $x_j$ if $j \in u$ and $z_j$ otherwise. The \emph{closed and total sensitivity indices} $\underline{s}_u = \underline{\tau}_u^2/\sigma^2$ and $\overline{s}_u = \overline{\tau}_u^2/\sigma^2$ respectively normalize the Sobol' indices to quantify the proportion of variance explained by a given subset of inputs. 

Suppose one is interested in computing the closed and total sensitivity indices of $\varphi$ at $u_1,\dots,u_c \subseteq 1:\nu$. Then we may choose the mean $\boldsymbol{\mu} \in \mathbb{R}^{(2, 3, c)}$ so that $\boldsymbol{\mu}_1,\boldsymbol{\mu}_2 \in \mathbb{R}^{(3,c)}$ contain values for the closed and total sensitivity indices respectively. Specifically,  $\boldsymbol{\mu}_{11},\boldsymbol{\mu}_{21} \in \mathbb{R}^c$ contain the closed and total Sobol' indices respectively while $\boldsymbol{\mu}_{i2}, \boldsymbol{\mu}_{i3} \in \mathbb{R}^c$ contain first and second moments respectively for any $i \in \{1,2\}$. For the QOI $\boldsymbol{s} \in \mathbb{R}^{(2, c)}$, we set $\boldsymbol{s}_1, \boldsymbol{s}_2 \in \mathbb{R}^c$ to contain the closed and total sensitivity indices respectively. 

Bounds may be propagated via $\boldsymbol{C}^-,\boldsymbol{C}^+:\mathbb{R}^{(2, 3, c)} \to \mathbb{R}^{(2, c)}$ defined for $i \in \{1,2\}$ and $j \in \{1,\dots,c\}$  by  
\begin{align*}
    C_{ij}^-(\boldsymbol{\mu}^-,\boldsymbol{\mu}^+) 
    = \begin{cases} 
        \text{clip}\left(\min\left(\frac{\mu_{i1j}^-}{\mu_{i3j}^+-\left(\mu_{i2j}^-\right)^2},\frac{\mu_{i1j}^-}{\mu_{i3j}^+-\left(\mu_{i2j}^+\right)^2}\right)\right), & \mu_{i3j}^- - \left(\mu_{i2j}^\pm\right)^2 >0 \\
        0, &\text{else}
     \end{cases} 
\end{align*}
with $C^+_{ij}(\boldsymbol{\mu}^-,\boldsymbol{\mu}^+)$ defined similarly and where $\text{clip}(\cdot) = \min(1,\max(0,\cdot))$ restricts values between 0 and 1. Above we have encoded the facts that sensitivity indices are between $0$ and $1$, the variance of $\varphi$ is non-negative, and Sobol' indices are non-negative. The dependency function $\boldsymbol{D}:\{\text{True},\text{False}\}^{(2, c)} \to \{\text{True},\text{False}\}^{(2, 3, c)}$ may be defined by broadcasting shapes so that for any $(1,1,1) \leq (i,j,k) \leq (2,3,c)$ we have $D_{ikj}(\boldsymbol{b}^{(\boldsymbol{s})}) = b_{ij}^{(\boldsymbol{s})}$. 

The QMCPy implementation further generalize to allow array objective functions $\boldsymbol{\varphi}: (0,1)^\nu \to \mathbb{R}^{\tilde{\boldsymbol{d}}_{\boldsymbol{\mu}}}$ so $\boldsymbol{d}_{\boldsymbol{\mu}} = (2,3,c,\tilde{\boldsymbol{d}}_{\boldsymbol{\mu}})$ and $\boldsymbol{d}_{\boldsymbol{s}} = (2,c,\tilde{\boldsymbol{d}}_{\boldsymbol{\mu}})$. Here the notation of nested vectors indicates that, for example, that  $(2,c,(5,6)) =(2,c,5,6)$. Also, notice that $d = 2 \nu$ in general. That is, the dimension of the node sequence is twice the size of the input dimension to $\varphi$.

Sensitivity indices present an illustrative case for computational complexity. Suppose the QMC algorithm takes $2^m$ total samples to accurately approximate all closed and total sensitivity indices for $u_1,\dots,u_c \subseteq 1:\nu$. Then the computational cost is $\$(\boldsymbol{\varphi})(2+c)2^m$ since every time our sensitivity index function is evaluated at $(\boldsymbol{x},\boldsymbol{z}) \in [0,1]^{2\nu}$ we must evaluate the users objective function at $\boldsymbol{x}$, $\boldsymbol{z}$, and $(\boldsymbol{x}_{u_j},\boldsymbol{z}_{-{u_j}})$ for $j=1,\dots,c$. If a user is only interested in approximating singleton sensitivity indices, $u_j = \{j\}$ for $j=1,\dots,\nu$, then it is possible to reduce the cost from $\$(\boldsymbol{\varphi})(2+\nu)2^m$ to $\$(\boldsymbol{\varphi})2^{m+1}$ using order $1$ replicated designs \cite{alex2008comparison,tissot2015randomized}. Such designs have been extended to  digital sequences in \cite{replicated_designs_sobol_seq} and utilized for sensitivity index approximation in \cite{reliable_sobol_indices_approx}.

A first example computes sensitivity indices of the Ishigami function \cite{ishigami1990importance} $g(\boldsymbol{T}) = (1+bT_3^4)\sin(T_1)+a\sin^2(T_2)$ where $\boldsymbol{T} \sim \mathcal{U}(-\pi,\pi)^3$ and $a=7$, $b=0.1$ as in \cite{crestaux2007polynomial,marrel2009calculations}. Figure \ref{SoRa_fig:ishigami} visualizes the resulting optimal approximations and QOI bounds which capture the exact sensitivity indices of the Ishigami function. 

\begin{figure}[t]
    \centering
    \includegraphics[width=.8\textwidth]{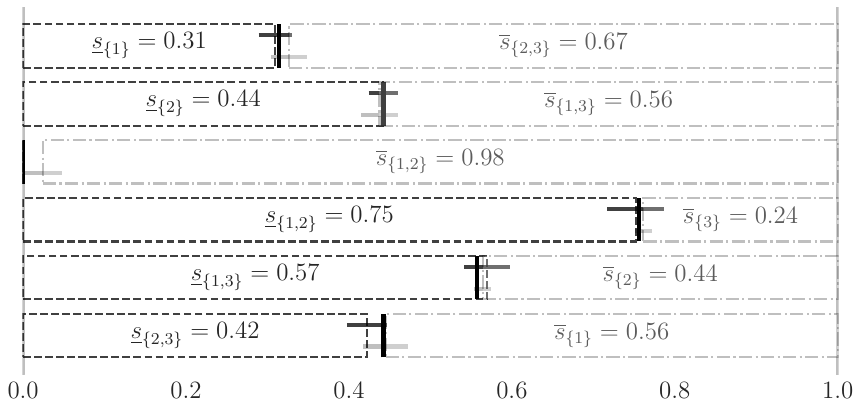}
    \caption{Approximate closed and total sensitivity indices for the Ishigami function illustrating the relationship $\underline{s}_u + \overline{s}_{u^c} = 1$ for all $u \subseteq 1:d$. In each row, the closed sensitivity index bar is  extended to the right from $0$ while the total sensitivity index bar is extended to the left from $1$. The bars should meet at the heavy vertical line for the analytic QOI $\underline{s}_u=1-\overline{s}_{u^c}$. The darker and lighter horizontal lines within each row depict the bounds for the closed and total sensitivity indices respectively. The heavy vertical line crossing both horizontal lines in each row indicates the true QOI is indeed captured in the bounds.}
    \label{SoRa_fig:ishigami}
\end{figure}

In another example, we compute sensitivity indices of a neural network classifier \cite{he2015delving} for the Iris dataset \cite{uci_ml_repo}. This example was inspired by a similar experiment in \cite{hoyt2021efficient}. The dataset consists of attributes sepal length (\textbf{SL}), sepal width (\textbf{SW}), petal length (\textbf{PL}), and petal width (\textbf{PW}), all in centimeters, from which an Iris is to be classified as either the \emph{setosa}, \emph{versicolor}, or \emph{virginica} species. We begin by fitting a neural network classifier that takes in input features and outputs a size $3$ vector of probabilities for each species summing to $1$. Taking the argument maximum among these three probabilities gives a species prediction. On a held out portion of the dataset, the neural network attains 98\% classification accuracy and may therefore be deemed a high quality surrogate for the true relation between input features and species classification. 

Our problem is to quantify, for each species, the variability in the classification probability attributed to a set of inputs. In other words, we would like to compute the sensitivity indices for each species probability. 
Here $\boldsymbol{d}_{\boldsymbol{\mu}} = (2,3,14,3)$ and $\boldsymbol{d}_{\boldsymbol{s}} = (2,14,3)$ since we have $3$ species classes, $14$ sensitivity indies of interest, and we are computing both the closed and total sensitivity indices. Figure \ref{SoRa_fig:nn_si} visualizes closed sensitivity index approximations. 

\begin{figure}[t]
    \centering
    \includegraphics[width=.8\textwidth]{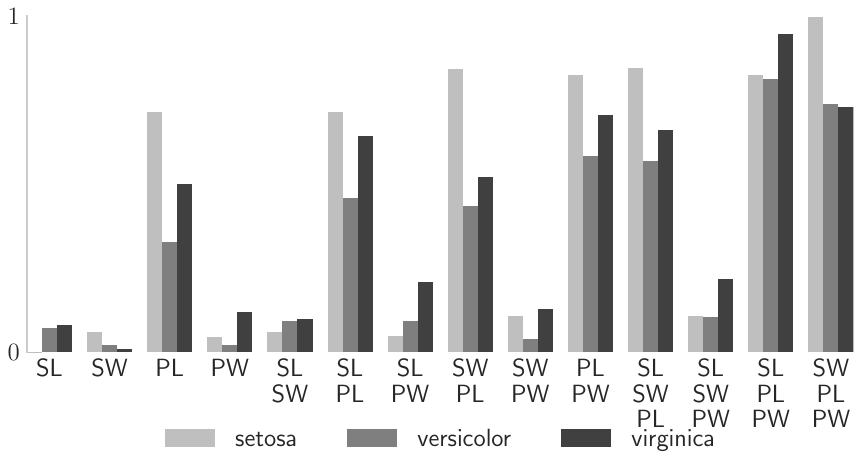}
    \caption{Closed sensitivity indices for neural network classification probability of each Iris species.}
    \label{SoRa_fig:nn_si}
\end{figure}

\section{Discussion and Further Work} \label{SoRa_sec:conclusions}

This article has utilized existing (Q)MC bounding techniques to approximate and bound array QOI formulated as a function of an array mean. The algorithm adaptively increases the sample size until a user specified stopping criterion on the QOI is met. The resulting bounds hold with uncertainty below a user specified threshold and the approximation is optimal with respect to the user specified error metric and error tolerance. Our work has been implemented into the open-source QMCPy package and exemplified on problems in machine learning and global sensitivity analysis. 

In the future, we hope to automatically analyze $\boldsymbol{C}$ to determine propagation functions $\boldsymbol{C}^-$ and $\boldsymbol{C}^+$ as well as the dependency structure encoded in $\boldsymbol{D}$. We also plan to allow dependency structures where QOI may depend on common individual solutions. Implementing order $1$ replicated designs will provide computational savings for special cases of sensitivity index computation as discussed in Section \ref{SoRa_sec:sensitivity_indices}. 

\section*{Acknowledgements}

The authors thank the referee for their valuable feedback. We would also like to thank Fred J. Hickernell for guidance and discussions which helped shape this work. 


\begin{thebibliography}{10}
\providecommand{\url}[1]{{#1}}
\providecommand{\urlprefix}{URL }
\expandafter\ifx\csname urlstyle\endcsname\relax
  \providecommand{\doi}[1]{DOI~\discretionary{}{}{}#1}\else
  \providecommand{\doi}{DOI~\discretionary{}{}{}\begingroup
  \urlstyle{rm}\Url}\fi

\bibitem{alex2008comparison}
Alex~Mara, T., Rakoto~Joseph, O.: Comparison of some efficient methods to
  evaluate the main effect of computer model factors.
\newblock Journal of Statistical Computation and Simulation \textbf{78}(2),
  167--178 (2008)

\bibitem{boole1847mathematical}
Boole, G.: The mathematical analysis of logic.
\newblock Philosophical Library (1847)

\bibitem{ChoEtal21a}
Choi, S.C.T., Ding, Y., Hickernell, F.J., Jiang, L., {Jim\'enez Rugama},
  {\relax Ll}.A., Li, D., Jagadeeswaran, R., Tong, X., Zhang, K., Zhang, Y.,
  Zhou, X.: {GAIL}: {G}uaranteed {A}utomatic {I}ntegration {L}ibrary (versions
  1.0--2.3.2).
\newblock MATLAB software, \url{http://gailgithub.github.io/GAIL\_Dev/} (2021).
\newblock \doi{10.5281/zenodo.4018189}

\bibitem{QMCPy}
Choi, S.C.T., Hickernell, F.J., Jagadeeswaran, R., McCourt, M.J., Sorokin,
  A.G.: {QMCPy}: A {Q}uasi-{M}onte {C}arlo {P}ython library (2022).
\newblock \urlprefix\url{https://github.com/QMCSoftware/QMCSoftware}

\bibitem{QMCSoftware}
Choi, S.C.T., Hickernell, F.J., Jagadeeswaran, R., McCourt, M.J., Sorokin,
  A.G.: Quasi-monte carlo software.
\newblock In: A.~Keller (ed.) Monte Carlo and Quasi-Monte Carlo Methods, pp.
  23--47. Springer International Publishing, Cham (2022)

\bibitem{crestaux2007polynomial}
Crestaux, T., Martinez, J., Le~Maitre, J., Lafitte, O.: Polynomial chaos
  expansion for uncertainties quantification and sensitivity analysis
  [powerpoint slides]. retrieved from SAMO 2007 (2007)

\bibitem{dick2013high}
Dick, J., Kuo, F.Y., Sloan, I.H.: High-dimensional integration: the quasi-monte
  carlo way.
\newblock Acta Numerica \textbf{22}, 133--288 (2013)

\bibitem{uci_ml_repo}
Dua, D., Graff, C.: {UCI} machine learning repository (2017).
\newblock \urlprefix\url{http://archive.ics.uci.edu/ml}

\bibitem{replicated_designs_sobol_seq}
Gilquin, L., {Jiménez Rugama}, L.A., Élise Arnaud, Hickernell, F.J., Monod,
  H., Prieur, C.: Iterative construction of replicated designs based on Sobol'
  sequences.
\newblock Comptes Rendus Mathematique \textbf{355}(1), 10--14 (2017).
\newblock \doi{https://doi.org/10.1016/j.crma.2016.11.013}.
\newblock
  \urlprefix\url{https://www.sciencedirect.com/science/article/pii/S1631073X16302576}

\bibitem{he2015delving}
He, K., Zhang, X., Ren, S., Sun, J.: Delving deep into rectifiers: Surpassing
  human-level performance on imagenet classification.
\newblock In: Proceedings of the IEEE international conference on computer
  vision, pp. 1026--1034 (2015)

\bibitem{hickernell1998generalized}
Hickernell, F.: A generalized discrepancy and quadrature error bound.
\newblock Mathematics of computation \textbf{67}(221), 299--322 (1998)

\bibitem{cubmcg}
Hickernell, F.J., Jiang, L., Liu, Y., Owen, A.: Guaranteed conservative fixed
  width confidence intervals via monte carlo sampling (2012)

\bibitem{cubqmcsobol}
Hickernell, F.J., {Jim\'enez Rugama}, L.A.: Reliable adaptive cubature using
  digital sequences (2014)

\bibitem{adaptive_qmc}
Hickernell, F.J., Jim\'{e}nez~Rugama, L.A., Li, D.: Adaptive quasi-monte carlo
  methods for cubature.
\newblock In: Contemporary Computational Mathematics-A Celebration of the 80th
  Birthday of Ian Sloan, pp. 597--619. Springer (2018)

\bibitem{hoyt2021efficient}
Hoyt, C., Owen, A.B.: Efficient estimation of the ANOVA mean dimension, with an
  application to neural net classification.
\newblock SIAM/ASA Journal on Uncertainty Quantification \textbf{9}(2),
  708--730 (2021)

\bibitem{ishigami1990importance}
Ishigami, T., Homma, T.: An importance quantification technique in uncertainty
  analysis for computer models.
\newblock In: [1990] Proceedings. First International Symposium on Uncertainty
  Modeling and Analysis, pp. 398--403. IEEE (1990)

\bibitem{cubqmcbayessobol}
Jagadeeswaran, R., Hickernell, F.J.: Fast automatic Bayesian cubature using
  Sobol’ sampling.
\newblock In: Advances in Modeling and Simulation: Festschrift for Pierre
  L'Ecuyer, pp. 301--318. Springer (2022)

\bibitem{reliable_sobol_indices_approx}
Jim{\'e}nez~Rugama, L.A., Gilquin, L.: {Reliable error estimation for Sobol'
  indices}.
\newblock {Statistics and Computing} \textbf{28}(4), 725--738 (2018).
\newblock \doi{10.1007/s11222-017-9759-1}.
\newblock \urlprefix\url{https://hal.inria.fr/hal-01358067}

\bibitem{cubqmclattice}
{Jim\'enez Rugama}, L.A., Hickernell, F.J.: Adaptive multidimensional
  integration based on rank-1 lattices (2014)

\bibitem{marrel2009calculations}
Marrel, A., Iooss, B., Laurent, B., Roustant, O.: Calculations of Sobol' indices
  for the Gaussian process metamodel.
\newblock Reliability Engineering \& System Safety \textbf{94}(3), 742--751
  (2009)

\bibitem{interval_analysis}
Moore, R.E., Kearfott, R.B., Cloud, M.J.: Introduction to interval analysis.
\newblock SIAM (2009)

\bibitem{niederreiter1992random}
Niederreiter, H.: Random number generation and quasi-Monte Carlo methods.
\newblock SIAM (1992)

\bibitem{mcbook}
Owen, A.B.: Monte Carlo theory, methods and examples (2013).
\newblock \urlprefix\url{https://artowen.su.domains/mc/}

\bibitem{rasmussen2006gaussian}
Rasmussen, C.E., Williams, C.K., et~al.: Gaussian processes for machine
  learning, vol.~1.
\newblock Springer (2006)

\bibitem{cubqmcbayes_thesis}
Rathinavel, J.: {F}ast automatic {B}ayesian cubature using matching kernels and
  designs.
\newblock Phd thesis, Illinois Institute of Technology, Chicago (2019).
\newblock \urlprefix\url{www.math.iit.edu}

\bibitem{cubqmcbayeslattice}
Rathinavel, J., Hickernell, F.J.: Fast automatic Bayesian cubature using
  lattice sampling.
\newblock Statistics and Computing \textbf{29}(6), 1215–1229 (2019).
\newblock \doi{10.1007/s11222-019-09895-9}.
\newblock \urlprefix\url{http://dx.doi.org/10.1007/s11222-019-09895-9}

\bibitem{snoek2012practical}
Snoek, J., Larochelle, H., Adams, R.P.: Practical Bayesian optimization of
  machine learning algorithms.
\newblock Advances in neural information processing systems \textbf{25} (2012)

\bibitem{vectorized_qmc_demo_notebook}
Sorokin, A.G., Jagadeeswaran, R.: {R}eproducible {E}xamples: {O}n {B}ounding
  and {A}pproximating {F}unctions of {M}ultiple {E}xpectations using
  {Q}uasi-{M}onte {C}arlo (2023).
\newblock
  \urlprefix\url{https://github.com/QMCSoftware/QMCSoftware/blob/master/demos/vectorized\_qmc.ipynb}

\bibitem{tissot2015randomized}
Tissot, J.Y., Prieur, C.: A randomized orthogonal array-based procedure for the
  estimation of first-and second-order Sobol' indices.
\newblock Journal of Statistical Computation and Simulation \textbf{85}(7),
  1358--1381 (2015)

\end{thebibliography}

\end{document}